\newtheorem{thm}{Theorem}[section]
\newtheorem{prop}[thm]{Proposition}
\newtheorem*{prop*}{Proposition}
\newtheorem{lem}[thm]{Lemma}
\newtheorem*{thm*}{Theorem}
\newtheorem*{lem*}{Lemma}
\theoremstyle{definition}
\newtheorem{exmp}[thm]{Example}
\newtheorem{notn}[thm]{Notation}
\newtheorem*{defn*}{Definition}
\newtheorem*{fact*}{Fact}
\theoremstyle{remark}
\newtheorem{rem}[thm]{Remark}
\newtheorem*{rem*}{Remark}
\newtheorem*{rems*}{Remarks}
\declaretheoremstyle[notefont=\bfseries,notebraces={}{},%
headpunct={},postheadspace=1em]{mystyle}
\newcommand{\Q}{\mathbf{Q}}
\newcommand{\Z}{\mathbf{Z}}
\newcommand{\R}{\mathbf{R}}
\newcommand{\C}{\mathbf{C}}
\newcommand{\RP}{\R\mathrm{P}}
\DeclareMathOperator{\Isom}{Isom}
\DeclareMathOperator{\tr}{tr}
\newcommand{\SL}[2]{\mathrm{SL}({#1},{#2})}
\newcommand{\PSL}[2]{\mathrm{PSL}({#1}, {#2})}
\title{Freely $2$-periodic knots have two canonical components}
\author{Keegan Boyle and Nicholas Rouse}
\begin{document}
\begin{abstract}
We prove that the $\SL{2}{\C}$ character variety of a hyperbolic, freely 2-periodic knot has two canonical components. We also prove that the hyperbolic torsion polynomial of such a knot satisfies a factorization condition which seems to be particularly effective at identifying freely 2-periodic knots.
\end{abstract}
	\maketitle
	\section{Introduction}
	An important invariant of a hyperbolic knot $K$ is the $\SL{2}{\C}$ character variety $X(K)$, introduced in \cite{MR0683804}. The character variety has been used to detect the existence of essential surfaces \cite{MR0683804} and in the proof of the cyclic surgery theorem \cite{CGLS} among many other applications such as \cite{CCGLS, Dunfield99}. A knot with a complete hyperbolic metric on the complement naturally comes with a discrete faithful representation to $\PSL{2}{\C} \cong \Isom^+(\mathbf{H}^3)$, which has exactly two lifts to $\SL{2}{\C}$ (see e.g., \cite[\S 4.1]{DunfieldFriedlJackson}). Any component of $X(K)$ that contains one of these two discrete faithful representations is called a \emph{canonical component} (or sometimes ``excellent component'' or ``Dehn surgery component''), and is of particular interest in the study of $X(K)$; see for example  \cite{KlaffTillmanBirat, ChuSurfaces, RouseSevenFour, CRS}.
	
	Certain classes of knots, such as hyperbolic $(-2,3,n)$-pretzel knots \cite{MR1949779} and $2$-bridge knots (see \cite{MR2484712} and \cite[Remark 3.4]{MR3922034}) are known to have a single canonical component, but it seems to be unknown whether a hyperbolic knot can have more than one canonical component \cite[Remark 3.3]{MR3922034}. We show that there are infinitely many knots with two canonical components by studying knots with a particular type of symmetry. Specifically, we consider knots that are invariant under the free order-$2$ symmetry on $S^3$, called \emph{freely 2-periodic} knots (see e.g., \cite[\S 1.1]{boyle2023classification}). For an example see Figure \ref{fig:10_157}. Character varieties of symmetric knots have been studied in \cite{PPInvariantCharVar} and \cite{HLMa} though neither paper considers the case of free $2$-periods. 
	
	Given a freely 2-periodic knot, there is a corresponding quotient knot $\overline{K}$ in $\RP^3$. The character variety is functorial, and so there is a map of character varieties $X(\overline{K}) \to X(K)$.  By studying this map, we prove the following theorem.

	\begin{thm} \label{thm:main}
		If $K \subseteq S^3$ is a hyperbolic, freely $2$-periodic knot, then the $\SL{2}{\C}$ character variety of $\pi_1(S^3 \setminus K)$ has two canonical components.
	\end{thm}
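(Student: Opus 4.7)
My plan is to study the restriction map $X(\overline{K}) \to X(K)$ of character varieties induced by the index-two inclusion $\pi_1(S^3 \setminus K) \hookrightarrow \pi_1(\RP^3 \setminus \overline{K})$ coming from the cover $S^3 \setminus K \to \RP^3 \setminus \overline{K}$, and to show that it carries four irreducible components of $X(\overline{K})$ onto two distinct canonical components of $X(K)$.

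First I would compute $H_1(\RP^3 \setminus \overline{K}) \cong \Z \oplus \Z/2$ via Mayer--Vietoris, using that $\overline{K}$ represents the nontrivial class in $H_1(\RP^3) = \Z/2$ (which is forced by connectivity of the preimage $K$). The character group $\Hom(\pi_1(\RP^3 \setminus \overline{K}), \{\pm 1\}) \cong (\Z/2)^2$ therefore has four elements, of which two restrict trivially to $\pi_1(S^3 \setminus K)$ (the trivial character and the one corresponding to the deck transformation of the cover) while the other two restrict to the meridian-parity character $\epsilon$. By Mostow--Prasad rigidity the hyperbolic structure on $S^3 \setminus K$ descends to $\RP^3 \setminus \overline{K}$, producing a discrete faithful representation $\overline{\rho}$ whose restriction to $\pi_1(S^3 \setminus K)$ is $\rho_0$; its four $\SL{2}{\C}$-lifts (twists by the four characters) then restrict two-to-one onto the two $\SL{2}{\C}$-lifts $\rho_0^\pm$ of $\rho_0$. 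By Thurston's theorem, the canonical components of both $X(\overline{K})$ and $X(K)$ are complex one-dimensional, and the restriction map on canonical components has generic fiber size at most two, since extensions of a representation of $\pi_1(S^3 \setminus K)$ to $\pi_1(\RP^3 \setminus \overline{K})$ form a torsor over the character group of the quotient $\Z/2$.

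The main step, and the main expected obstacle, is to show that the four lifts of $\overline{\rho}$ lie on four pairwise distinct irreducible components of $X(\overline{K})$. Granting this, a degree count finishes the proof: the preimage in $X(\overline{K})$ of any canonical component of $X(K)$ contains at most two dominantly mapping components of $X(\overline{K})$, so the four distinct components in $X(\overline{K})$ must project onto at least two canonical components of $X(K)$---and exactly two, since $\rho_0$ admits only two lifts. For the distinctness I would examine the pair $(\tr \rho(\mu_{\overline{K}}), \tr \rho(\lambda_{\overline{K}}))$: at the four lifts this takes all four sign combinations in $\{\pm 2\}^2$, and since the Zariski tangent space $H^1(\pi_1(\RP^3 \setminus \overline{K}); \operatorname{Ad} \overline{\rho})$ at each lift is one-dimensional (so each lift lies on a unique irreducible component of $X(\overline{K})$), ruling out that any nontrivial character twist preserves the canonical component through a given lift will yield the required distinctness. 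The technical heart of the argument is therefore this trace-sign analysis combined with excluding character-twist invariance of the canonical components of $X(\overline{K})$.
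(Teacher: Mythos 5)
Your proposal goes wrong at the very first computation: $H_1(\RP^3 \setminus \overline{K};\Z) \cong \Z$, not $\Z \oplus \Z/2$ (this is Lemma \ref{lem:rp3homology} in the paper, which cites Manfredi; the $\Z \oplus \Z/2$ answer arises only when $\overline{K}$ represents the \emph{trivial} class in $H_1(\RP^3)$, in which case it lifts to a two-component link rather than a knot). Consequently $\Hom(\pi_1(\RP^3\setminus\overline{K}),\{\pm 1\}) \cong \Z/2$, so the $\PSL{2}{\C}$ holonomy $\overline{\rho}$ of the quotient orbifold has exactly two $\SL{2}{\C}$ lifts, not four. Worse, those two lifts differ by twisting by the unique nontrivial character $\epsilon$, and $\pi_1(S^3\setminus K)$ is precisely $\ker\epsilon$ (Lemma \ref{lem:pi_1_ker}); so both lifts restrict to the \emph{same} $\SL{2}{\C}$ representation of $\pi_1(S^3\setminus K)$, not two-to-one onto the pair $\rho_0^\pm$. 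Your entire ``four lifts projecting two-to-one'' picture, and with it the degree count that was supposed to finish the argument, collapses.

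What the collapsed picture should alert you to is that the interesting question is \emph{which} of $\rho_0^+$, $\rho_0^-$ arises as the restriction. The proof in the paper hinges on showing (Proposition \ref{prop:traceMinusTwo}) that it is always the $\tr = -2$ lift, via an explicit computation of how a meridian of $K$ pushes forward under the cover: writing $m \mapsto \alpha^2\overline{\ell}$ in a carefully chosen basis for $H_1(\partial(\RP^3\setminus\overline{K}))$ (Lemma \ref{lem:homologyMap}), trace identities together with the fact that $\tr\rho_{\overline{K}}(\overline{\ell}) = -2$ (Calegari) rule out $\tr\rho_K(m) = +2$. One then invokes that the image of a canonical component of $X(\overline{K})$ under $\widehat{p}$ is a closed irreducible curve which is itself a canonical component of $X(K)$ (Boyer--Luft--Zhang), and that only characters of discrete faithful representations of $\pi_1(\RP^3\setminus\overline{K})$ can restrict to characters of discrete faithful representations of $\pi_1(S^3\setminus K)$ (Lemma \ref{lem:noGarbage}); since that image contains the $-2$ character but not the $+2$ character, the latter must lie on a second canonical component. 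None of these ingredients---the meridian trace calculation, the use of the Boyer--Luft--Zhang image result, or the discreteness/faithfulness lemma for the ambient group---appears in your outline, and the ``trace-sign analysis'' and ``excluding character-twist invariance'' you gesture at are not enough to substitute for them even had the $H_1$ computation been correct.
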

	As a corollary this gives an obstruction to free $2$-periodicity. For example we obtain a new proof that $2$-bridge knots and $(-2,3,n)$-pretzel knots are not freely $2$-periodic.
	
	In general canonical components of character varieties are difficult to compute. Despite some effort, we were unable to compute the irreducible components of any freely $2$-periodic knot explicitly. 
\begin{figure}
	\scalebox{.3}{\includegraphics{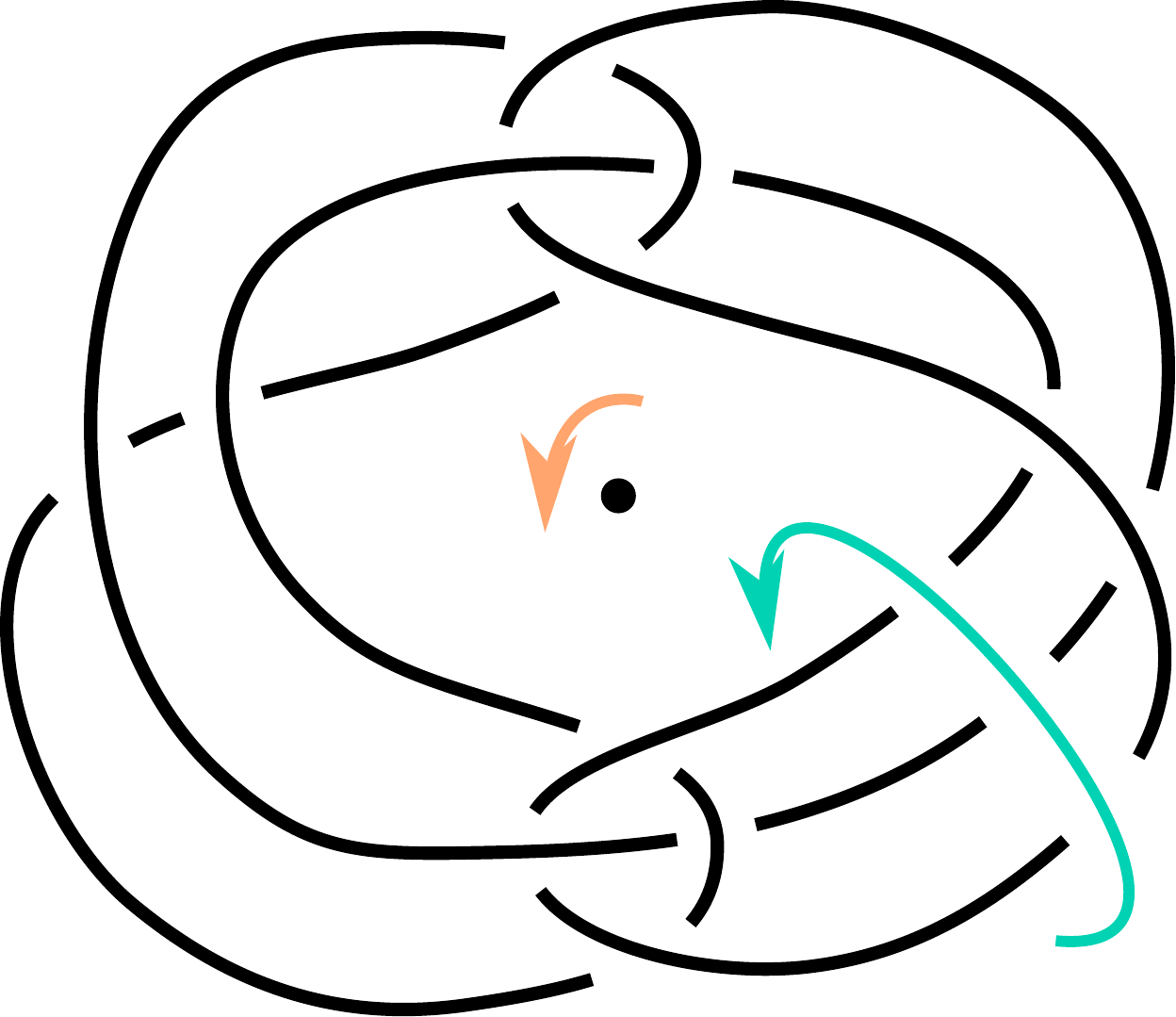}}
	\caption{The freely 2-periodic knot $10_{157}$. The symmetry is the composition of $\pi$ rotations around two disjoint circles, indicated by the two arrows. By Theorem \ref{thm:main} $X(10_{157})$ has two canonical components.}
	\label{fig:10_157}
\end{figure}
	\begin{rem}
	In our definition of a canonical component, we insist that the discrete faithful representations respect the orientation on the knot exterior. However, the complex conjugate of a discrete faithful representation of the knot group is also discrete and faithful, corresponding to the manifold with reversed orientation. See \cite[\S 4]{MR3652806} for an example of a $1$-cusped hyperbolic manifold for which the character of a discrete faithful representation and its complex conjugate lie on different components of the character variety. In contrast, Theorem \ref{thm:main} guarantees two distinct components containing characters of discrete faithful representations corresponding to the same orientation.
	\end{rem}

	\begin{rem}
	While the two canonical components are abstractly isomorphic (see for example, \cite[Remark 3.3]{MR3922034}), our proof furnishes a geometric distinction. In particular, let $X_+$ and $X_-$ be the components containing characters of discrete faithful representations that send a meridian to $+2$ and $-2$ respectively. Then $X_-$ is in the image of the map on character varieties induced by the $2$-fold cover $\widehat{p} \colon X(\overline{K}) \to X(K)$ while $X_+$ is not. For any hyperbolic knot with two canonical components, there is also a distinction coming from the characters of Dehn surgeries. Let $S^3_{p/q}(K)$ be $p/q$ Dehn surgery on $K$. If $|H_1(S^3_{p/q}(K);\Z)|$ is odd, then up to complex conjugation there is a unique character corresponding to a discrete faithful representation $\pi_1(S^3_{p/q}(K)) \to \SL{2}{\C}$, which can lie on at most one canonical component of $X(K)$. That is, the canonical components are distinguished by which Dehn surgery characters lie on each component.
	\end{rem}

	While proving Theorem \ref{thm:main}, we see that exactly one of the two discrete faithful $\SL{2}{\C}$ representations for a freely 2-periodic knot is the restriction of a discrete faithful representation for the quotient knot. This implies, via \cite[Theorem 5]{HillmanLivingstonNaik}, a particular factorization condition on the hyperbolic torsion polynomial, answering Question 3 in \S 1.17 of \cite{DunfieldFriedlJackson}.
\begin{thm} \label{thm:hyperbolictorsion}
	Let $K$ be a freely $2$-periodic hyperbolic knot with quotient knot $\overline{K}$. Let $\mathcal{T}_K$, $\mathcal{T}_{\overline{K}}$ be the hyperbolic torsion polynomials for $K$ and $\overline{K}$ respectively. Then
	\[
	\mathcal{T}_K(-t^2) = \mathcal{T}_{\overline{K}}(t)\mathcal{T}_{\overline{K}}(-t),
	\]
	where this factorization occurs over the trace field of $S^3 \setminus K$.
\end{thm}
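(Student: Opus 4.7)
The plan is to apply the twisted Alexander polynomial factorization theorem \cite[Theorem~5]{HillmanLivingstonNaik} to the $2$-fold cover $S^3 \setminus K \to \RP^3 \setminus \overline{K}$. The essential input, established in the proof of Theorem~\ref{thm:main}, is that the discrete faithful representation $\rho_{\overline{K}} \colon \pi_1(\RP^3\setminus\overline{K}) \to \SL{2}{\C}$ restricts to one of the two $\SL{2}{\C}$-lifts of the discrete faithful $\PSL{2}{\C}$-representation of $\pi_1(S^3\setminus K)$; specifically, to the lift $\rho_-$ sending the meridian to trace $-2$, because the character of $\rho_-$ lies on the canonical component $X_-$ in the image of the induced map $\widehat{p}\colon X(\overline{K})\to X(K)$, while the other lift $\rho_+$ does not.

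Next I would work out the covering data on homology. Since $\mu_K\mapsto\mu_{\overline{K}}$ under inclusion, and $H_1(\RP^3\setminus\overline{K};\Z)/\langle\mu_{\overline{K}}\rangle$ surjects onto $\pi_1(\RP^3) = \Z/2$, the meridian $\mu_{\overline{K}}$ has index $2$ in $H_1(\RP^3\setminus\overline{K};\Z)/\mathrm{tor}\cong\Z$; writing $\mu_{\overline{K}} = 2u$ for a primitive generator $u$, the covering identifies the abelianization variable $t$ for $K$ with $u^2$. The nontrivial character $\chi$ of $\Z/2 = \Gal(S^3/\RP^3)$ pulls back to the character $(-1)^u$ on $\pi_1(\RP^3\setminus\overline{K})$, so twisting by $\chi$ substitutes $u\mapsto -u$ in the twisted Alexander polynomial. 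Applying \cite[Theorem~5]{HillmanLivingstonNaik} then yields
\[
\mathcal{T}_{K,\rho_-}(u^2) = \mathcal{T}_{\overline{K},\rho_{\overline{K}}}(u)\cdot\mathcal{T}_{\overline{K},\rho_{\overline{K}}\otimes\chi}(u) = \mathcal{T}_{\overline{K}}(u)\cdot\mathcal{T}_{\overline{K}}(-u).
\]

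Finally, to convert to the convention using $\rho_+$ implicit in $\mathcal{T}_K$, I would observe that $\rho_-$ and $\rho_+$ differ by the sign of the meridian abelianization, so $\mathcal{T}_{K,\rho_-}(t)=\mathcal{T}_K(-t)$; substituting $t = u^2$ gives the desired identity $\mathcal{T}_K(-u^2)=\mathcal{T}_{\overline{K}}(u)\mathcal{T}_{\overline{K}}(-u)$. The trace field statement then follows because twisting by $\chi$ realizes the Galois action of the (at most quadratic) extension of the trace field of $K$ by that of $\overline{K}$, so the product $\mathcal{T}_{\overline{K}}(u)\mathcal{T}_{\overline{K}}(-u)$ is Galois-invariant and descends to a polynomial over the trace field of $S^3\setminus K$. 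The main obstacle will be the bookkeeping: rigorously verifying the index-$2$ claim for $\mu_{\overline{K}}$ (using that the free action makes the deck involution act trivially on $H_1(S^3\setminus K)$) and then carefully tracking how the two sign conventions, namely the choice of lift $\rho_\pm$ on $K$ and the twist by $\chi$ on $\overline{K}$, combine to produce the precise sign on $t^2$ in the final formula.
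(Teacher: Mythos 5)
Your main argument is essentially the one in the paper: apply \cite[Theorem 5]{HillmanLivingstonNaik} to the $2$-fold cover, with the crucial input (Proposition \ref{prop:traceMinusTwo}) that it is $\rho_-$, the lift with meridian trace $-2$, that restricts from a discrete faithful representation of $\pi_1(\RP^3\setminus\overline{K})$, and then use \cite[Remark 4.4]{DunfieldFriedlJackson} to convert $\mathcal{T}_{K,\rho_-}(t)=\mathcal{T}_K(-t)$. You work out the covering and abelianization data ($\mu_{\overline{K}}=2u$, $t\mapsto u^2$, the twist $\chi$ giving $u\mapsto -u$) explicitly, whereas the paper leaves this implicit in the citation; that part of your derivation is sound.

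The trace-field step has a genuine gap. You argue that the \emph{product} $\mathcal{T}_{\overline{K}}(u)\mathcal{T}_{\overline{K}}(-u)$ is Galois-invariant over an ``at most quadratic'' extension and therefore descends. But that only shows the product lies over the trace field of $S^3\setminus K$, which is already clear since it equals $\mathcal{T}_K(-t^2)$; the theorem asserts that the \emph{factorization} occurs over the trace field, i.e.\ that $\mathcal{T}_{\overline{K}}(t)$ itself has coefficients there, and your Galois-invariance of the product does not give this. In fact, if the trace field of $\RP^3\setminus\overline{K}$ really were a proper quadratic extension of that of $S^3\setminus K$, the factors would generally not descend and the theorem would fail. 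The paper closes this by proving the two trace fields actually coincide: Lemma \ref{lem:rp3homology} shows $\mathrm{H}_1(\RP^3\setminus\overline{K})$ is generated by a boundary class, so \cite[Theorem 4.2.1]{MR} gives that the trace field of $\overline{K}$ equals its invariant trace field, which by commensurability invariance \cite[Theorem 3.3.4]{MR} equals the (invariant = ordinary) trace field of $K$; then \cite[Theorem 1.1(b)]{DunfieldFriedlJackson} puts the coefficients of $\mathcal{T}_{\overline{K}}$ in this common field. You should replace the hand-wavy Galois argument with this equality of trace fields.
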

In Example \ref{ex:12n553} we use Theorem \ref{thm:hyperbolictorsion} to show that the knots $12n553$ and $12n556$ are not freely 2-periodic, despite passing all obstructions to free periodicity that we could find in the literature.
\begin{rem}
In Theorem \ref{thm:hyperbolictorsion}, $\mathcal{T}_K(t)$ is the twisted Alexander polynomial corresponding to the discrete faithful representation $\pi_1(S^3 \setminus K) \to \SL{2}{\C}$ in which the trace of a meridian is $2$. In all of our computations, the twisted Alexander polynomial corresponding to the other discrete faithful representation does not factor as in Theorem \ref{thm:hyperbolictorsion}.
\end{rem}
%\begin{rem}
%For freely periodic symmetries of odd order $n$, both discrete faithful $\SL{2}{\C}$ representations for $K$ are restrictions of a discrete faithful representation for $\overline{K}$, so that \cite[Theorem 5]{HillmanLivingstonNaik} produces a factorization of both twisted Alexander polynomials.
%\end{rem}
\subsection{Questions} Based on some computational work, we find it natural to ask the following.
	\begin{enumerate}
		\item Is there a hyperbolic knot in $S^3$ that is not freely $2$-periodic but does have $2$ canonical components in its $\SL{2}{\C}$ character variety?
		\item Is there a hyperbolic knot in $S^3$ that is not freely $2$-periodic whose hyperbolic torsion polynomial admits a factorization of the form in Theorem \ref{thm:hyperbolictorsion}?
	\end{enumerate}
\subsection{Acknowledgments} We thank Ben Williams for helpful conversations. We thank Liam Watson and Ben Williams for their feedback on a draft of the paper.

	\section{Proof of Theorem \ref{thm:main}} \label{sec:main}
	\begin{notn}
	In this section, $K$ refers to hyperbolic, freely $2$-periodic knot in $S^3$, and $\overline{K}$ refers to its quotient knot in $\RP^3$. We slightly abuse the notation $S^3 \setminus K$ to refer to the complement of a tubular neighborhood of $K$, so that $\partial(S^3 \setminus K)$ is a torus. We use $\RP^3 \setminus \overline{K}$ similarly.
	\end{notn}

	\begin{lem} \label{lem:rp3homology}
		Let $K$ be a freely $2$-periodic knot in $S^3$ and $\overline{K} \subseteq \RP^3$ its quotient knot. Then $\overline{K}$ represents the nontrivial homology class of $\RP^3$, $\mathrm{H}_1(\RP^3 \setminus \overline{K}; \Z) = \Z$, and the map $\mathrm{H}_1(\partial(\RP^3 \setminus \overline{K}); \Z) \to \mathrm{H}_1(\RP^3 \setminus \overline{K}; \Z)$ is surjective.
	\end{lem}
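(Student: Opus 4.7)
The plan is to handle the three claims in sequence: the homology class of $\overline{K}$ in $\RP^3$, the isomorphism $H_1(\RP^3 \setminus \overline{K}) \cong \Z$, and the surjectivity of the boundary inclusion. For the first, since $K = p^{-1}(\overline{K})$ is a single connected knot rather than two disjoint copies, $\overline{K}$ cannot lift to a loop under the double cover $p \colon S^3 \to \RP^3$. As $\pi_1(S^3)$ is trivial, this is equivalent to $\overline{K}$ being nontrivial in $\pi_1(\RP^3) = \Z/2 = H_1(\RP^3)$.

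Next, I would apply Mayer--Vietoris to the decomposition $\RP^3 = U \cup N(\overline{K})$, where $U = \RP^3 \setminus \overline{K}$ and $N(\overline{K}) \cap U \simeq T^2$. Using $H_2(\RP^3) = 0$ and $H_1(\RP^3) = \Z/2$, this gives the short exact sequence $0 \to \Z^2 \to H_1(U) \oplus \Z \to \Z/2 \to 0$. The projections to $\Z$ from both $\Z^2$ (sending meridian to $0$ and longitude to $\pm 1$) and $H_1(U) \oplus \Z$ (onto the second factor) fit into a commutative diagram to which the snake lemma applies, yielding
\[
0 \to \Z \to H_1(U) \to \Z/2 \to 0.
\]
Hence $H_1(U)$ is either $\Z$ or $\Z \oplus \Z/2$.

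The main obstacle is ruling out the split case. For this I would use Poincar\'e--Lefschetz duality on the compact exterior $M = \RP^3 \setminus \mathrm{int}\, N(\overline{K})$: since $M$ is an orientable $3$-manifold with boundary $T^2$, we have
\[
H^1(U; \Z/2) \cong H_2(M, \partial M; \Z/2) \cong H_2(\RP^3, \overline{K}; \Z/2).
\]
The long exact sequence of the pair $(\RP^3, \overline{K})$ with $\Z/2$ coefficients collapses, using that $H_1(\overline{K}; \Z/2) \to H_1(\RP^3; \Z/2)$ is an isomorphism (by the first part), to give $H_2(\RP^3, \overline{K}; \Z/2) \cong H_2(\RP^3; \Z/2) = \Z/2$. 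By the universal coefficient theorem, $\Hom(H_1(U), \Z/2) = H^1(U; \Z/2) = \Z/2$, which rules out $H_1(U) = \Z \oplus \Z/2$ (whose $\Hom$ to $\Z/2$ would be $(\Z/2)^2$). Hence $H_1(U) \cong \Z$.

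For the surjectivity, the meridian $\overline{\mu}$ generates the image of $p_* \colon H_1(S^3 \setminus K) \to H_1(U)$, which equals the index-$2$ kernel of $H_1(U) \to H_1(\RP^3) = \Z/2$, so $\overline{\mu} = \pm 2$ in $H_1(U) = \Z$. A longitude $\overline{\lambda}$ is homologous to $\overline{K}$ in $\RP^3$, hence nontrivial in $H_1(\RP^3) = \Z/2$, so its image in $H_1(U) = \Z$ is odd. Since $\gcd(2, \text{odd}) = 1$, the meridian and longitude together generate $H_1(U) = \Z$.
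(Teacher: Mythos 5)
Your proof is correct, and it takes a genuinely different route from the paper's. The paper establishes the nontriviality of $[\overline{K}]$ exactly as you do (a nullhomologous knot in $\RP^3$ lifts to a two-component link), and then simply cites \cite[Corollary 5]{ManfrediHomology} for the remaining two claims. You instead give a self-contained computation: Mayer--Vietoris plus a snake-lemma reduction produces the extension $0 \to \Z \to H_1(U) \to \Z/2 \to 0$, leaving the two possibilities $\Z$ and $\Z \oplus \Z/2$; Poincar\'e--Lefschetz duality with $\Z/2$ coefficients, together with the long exact sequence of the pair $(\RP^3, \overline{K})$ and the universal coefficient theorem, then computes $\Hom(H_1(U), \Z/2) \cong \Z/2$, ruling out the split case. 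Finally your surjectivity argument pins the meridian to $\pm 2 \in \Z$ via the index-two cover and the longitude to an odd element via its image in $H_1(\RP^3)$, and invokes $\gcd(2,\text{odd})=1$. One small point worth spelling out: the identification of the image of $p_*$ on $H_1$ with the kernel of $H_1(U) \to H_1(\RP^3)$ uses that the unique nontrivial map $\pi_1(U) \to \Z/2$ factors through $H_1(U)$ and agrees with the inclusion-induced map to $\pi_1(\RP^3)$ (the latter being surjective by your Mayer--Vietoris computation); this is a standard verification but is asserted rather than proved in your write-up. Your approach trades the concision of the citation for a complete argument from first principles, which a reader not wanting to chase down Manfredi's paper would likely appreciate.
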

	\begin{proof}
		Note that the quotient knot $\overline{K}$ represents the nontrivial homology class because knots representing the trivial homology class in $\RP^3$ lift to $2$-component links in $S^3$. It follows that the homology of the complement of the quotient knot is $\Z$. See, e.g., \cite[Corollary 5]{ManfrediHomology}. The proof of the same corollary in fact shows that the homology of $\RP^3 \setminus \overline{K}$ is generated by a boundary element. 
	\end{proof}
	
	\begin{lem} \label{lem:pi_1_ker}
		Let $K$ be a freely $2$-periodic knot in $S^3$ and $\overline{K} \subseteq \RP^3$ its quotient knot. Let $\epsilon \colon \pi_1(\RP^3 \setminus \overline{K}) \rightarrow \Z/2\Z$ be the non-trivial map. Then $\pi_1(S^3 \setminus K) = \ker{\epsilon}$.
	\end{lem}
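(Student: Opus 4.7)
The plan is to identify $\pi_1(S^3\setminus K)$ as an index-$2$ subgroup of $\pi_1(\RP^3\setminus\overline{K})$ via covering space theory, and then use the homology computation of Lemma~\ref{lem:rp3homology} to argue that there is only one such subgroup, namely $\ker\epsilon$.

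First I would set up the covering. Since $K$ is freely $2$-periodic, choose the free involution $\tau$ of $S^3$ preserving $K$ so that its quotient map is $S^3\to\RP^3$ with image of $K$ equal to $\overline{K}$. Choosing an equivariant tubular neighborhood of $K$, the restriction $S^3\setminus K \to \RP^3\setminus\overline{K}$ is a connected free $\Z/2\Z$-action, hence a $2$-fold regular covering map. Therefore $\pi_1(S^3\setminus K)$ embeds as a normal subgroup of index $2$ in $\pi_1(\RP^3\setminus\overline{K})$.

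Next I would invoke the standard correspondence: normal index-$2$ subgroups of a group $G$ are in bijection with surjective homomorphisms $G\twoheadrightarrow\Z/2\Z$, via taking kernels. Such homomorphisms factor through $H_1(G;\Z)\otimes\Z/2\Z$. By Lemma~\ref{lem:rp3homology}, $H_1(\RP^3\setminus\overline{K};\Z)\cong\Z$, and so there is exactly one nontrivial homomorphism $\pi_1(\RP^3\setminus\overline{K})\to\Z/2\Z$, namely $\epsilon$. Consequently $\ker\epsilon$ is the unique index-$2$ subgroup of $\pi_1(\RP^3\setminus\overline{K})$, and it must coincide with $\pi_1(S^3\setminus K)$.

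There is no real obstacle here; the only point that requires care is verifying that the cover on complements is genuinely a $2$-fold cover (rather than, say, disconnected), which follows because $S^3\setminus K$ is connected and $\tau$ acts freely. The bulk of the work was already done in Lemma~\ref{lem:rp3homology}, which ensures the uniqueness of $\epsilon$.
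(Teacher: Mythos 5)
Your proposal is correct and follows essentially the same route as the paper: use Lemma \ref{lem:rp3homology} to conclude that $\mathrm{H}_1(\RP^3\setminus\overline{K};\Z)\cong\Z$ admits a unique nontrivial map to $\Z/2\Z$, hence the connected double cover is unique and must be $S^3\setminus K$. You spell out a few covering-space details that the paper leaves implicit, but the key idea and its dependence on the homology lemma are identical.
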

	\begin{proof}
		Note that $\mathrm{H}_1(\RP^3 \setminus \overline{K}; \Z) = \Z$ by Lemma \ref{lem:rp3homology}. Then there is a unique nontrivial map $\epsilon \colon \pi_1(\RP^3 \setminus \overline{K}) \to \Z/2\Z$. Therefore $S^3 \setminus K$ is the unique $2$-fold connected cover, and $\ker{\epsilon} = \pi_1(S^3 \setminus K)$.
	\end{proof}
	\begin{lem} \label{lem:homologyMap}
		There are bases $\{m, \ell\}$ and $\{\alpha, \overline{\ell}\}$ for $\mathrm{H}_1(\partial(S^3 \setminus K); \Z)$ and $\mathrm{H}_1(\partial(\RP^3 \setminus \overline{K}); \Z)$, respectively such that
		\begin{enumerate}[label={(\arabic*)},ref={(\arabic*)}]
			\item \label{homologyMap:merLong} $m$ and $\ell$ are represented by a meridian and longitude respectively for $K$,
			\item \label{homologyMap:alpha} $\alpha$ generates $\mathrm{H}_1(\RP^3 \setminus \overline{K}; \Z)$,
			\item \label{homologyMap:boundsSurface} $\overline{\ell}$ is represented by the boundary of an embedded, oriented surface in $\RP^3 \setminus \overline{K}$,
			\item \label{homologyMap:matrix} the map $f\colon \mathrm{H}_1(\partial(S^3 \setminus K); \Z) \to \mathrm{H}_1(\partial(\RP^3 \setminus \overline{K}); \Z)$  induced by the $2$-fold cover is given by
			\[
			\begin{pmatrix}
				2 & 0 \\
				1 & 1
			\end{pmatrix}
			\]
			with respect to the ordered bases $\{m,\ell\}$ and $\{\alpha, \overline{\ell}\}$.
		\end{enumerate} 
	\end{lem}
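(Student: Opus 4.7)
The plan is to choose each basis in a canonical way up to signs, compute the map $f$ in those bases, and then absorb any remaining sign ambiguity into the choices of $\alpha$ and $\overline{\ell}$.

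\textbf{Setup.} I would take $m$ and $\ell$ to be the standard meridian and Seifert longitude for $K$, so that $\ell$ is nullhomologous in $S^3 \setminus K$. By Lemma~\ref{lem:rp3homology}, the map $\mathrm{H}_1(\partial(\RP^3 \setminus \overline{K}); \Z) \to \mathrm{H}_1(\RP^3 \setminus \overline{K}; \Z) \cong \Z$ is surjective, so its kernel is a primitive rank-$1$ subgroup of $\Z^2$. Let $\overline{\ell}$ generate that kernel; a standard surface-construction argument applied to any $2$-chain bounding $\overline{\ell}$ shows $\overline{\ell}$ is represented by the boundary of an embedded oriented surface in $\RP^3 \setminus \overline{K}$. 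I would then pick $\alpha \in \mathrm{H}_1(\partial(\RP^3 \setminus \overline{K}))$ whose image generates $\mathrm{H}_1(\RP^3 \setminus \overline{K})$; such a choice is only pinned down up to a sign and addition of multiples of $\overline{\ell}$, which I will exploit later.

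\textbf{Computing $f$.} Because $\ell$ dies in $\mathrm{H}_1(S^3 \setminus K)$, functoriality of the inclusion from boundary to complement forces $f(\ell)$ to die in $\mathrm{H}_1(\RP^3 \setminus \overline{K})$, so $f(\ell) = b\,\overline{\ell}$ for some $b \in \Z$. To compute $f(m)$, I first identify the induced map $p_*\colon \mathrm{H}_1(S^3 \setminus K) \to \mathrm{H}_1(\RP^3 \setminus \overline{K})$ as multiplication by $\pm 2$: take a loop $\overline{\gamma}$ in $\RP^3 \setminus \overline{K}$ whose class generates $\mathrm{H}_1$; by Lemma~\ref{lem:pi_1_ker}, $\overline{\gamma}^2 \in \ker\epsilon$, so it lifts to a loop $\gamma$ in $S^3 \setminus K$ with $p_*[\gamma] = 2$; writing $[\gamma] = k[m]$ in $\mathrm{H}_1(S^3 \setminus K) \cong \Z\langle [m]\rangle$ yields $k \cdot p_*[m] = 2$, forcing $p_*[m] = \pm 2$. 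Consequently $f(m)$ is a meridian of $\overline{K}$, a primitive class in $\mathrm{H}_1(\partial(\RP^3 \setminus \overline{K}))$ whose image in $\mathrm{H}_1(\RP^3 \setminus \overline{K})$ is $\pm 2\alpha$, so $f(m) = \pm 2\alpha + c\,\overline{\ell}$ for some $c \in \Z$ with $\gcd(2,c) = 1$, i.e.\ $c$ odd.

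\textbf{Normalization.} Since $f$ is a degree-$2$ cover of tori, $|\det f| = 2$; combined with $f(\ell) = b\overline{\ell}$ this forces $|2b| = 2$, so $b = \pm 1$. Flipping the sign of $\overline{\ell}$ if necessary makes $b = 1$; flipping the sign of $\alpha$ if necessary makes the $\alpha$-coefficient of $f(m)$ equal to $+2$; finally, since $c$ is odd, replacing $\alpha$ by $\alpha + \tfrac{c-1}{2}\,\overline{\ell}$ (which still maps to a generator of $\mathrm{H}_1(\RP^3 \setminus \overline{K})$ because $\overline{\ell}$ is nullhomologous there) normalizes the $\overline{\ell}$-coefficient of $f(m)$ to $1$, producing the matrix in the statement. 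The main obstacle is pinning down that $p_*[m] = \pm 2$ rather than $0$ or a higher even multiple; after that, everything is forced by linear algebra using that $f$ is a $2$-fold cover of tori and that $\ell$ is nullhomologous.
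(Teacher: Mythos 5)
Your overall strategy is close to the paper's but not identical, and there is one genuine gap in the step you yourself flag as the crux.

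The gap is in your derivation of $p_*[m] = \pm 2$. From $\overline{\gamma}^2 \in \ker\epsilon$ lifting to $\gamma$ with $[\gamma] = k[m]$, you obtain $k \cdot p_*[m] = 2$. This only shows that $p_*[m]$ \emph{divides} $2$; it does not exclude $p_*[m] = \pm 1$ (with $k = \pm 2$). Your sentence ``forcing $p_*[m] = \pm 2$'' is therefore unjustified as written, and the remainder of the linear algebra leans on it: if $p_*[m] = \pm 1$ then $f(m) = \pm\alpha + c\overline{\ell}$ and the determinant constraint would instead force $b = \pm 2$, breaking the normalization. The fix is a one-liner that you already have the ingredients for: since $m \in \pi_1(S^3\setminus K) = \ker\epsilon$ (Lemma~\ref{lem:pi_1_ker}) and $\epsilon$ factors through $\mathrm{H}_1(\RP^3\setminus\overline{K}) \cong \Z \twoheadrightarrow \Z/2$, the image of $p_*$ lies in $2\Z$, so $p_*[m]$ is even; combined with $p_*[m] \mid 2$ and $p_*[m] \neq 0$ this gives $p_*[m] = \pm 2$. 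With that patch the argument goes through.

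For comparison, the paper takes the transposed route: it pins down $f(\ell) = \overline{\ell}$ directly (by lifting a Seifert surface $\overline{\Sigma}$ for $\overline{K}$ to a pair of Seifert surfaces for $K$, so that the longitude $\ell$ literally maps to $\overline{\ell}$), and then obtains the $(1,1)$-entry $2$ from $\lvert\det f\rvert = 2$; you instead pin down the $(1,1)$-entry $2$ and deduce $f(\ell) = \pm\overline{\ell}$ from the determinant. Both routes then use primitivity of $f(m)$ (the paper via the simple-closed-curve argument and the free deck action; you via $f(m)$ being a meridian of $\overline{K}$, which is essentially the same observation) to force the off-diagonal entry odd, and the same change of basis $\alpha \mapsto \alpha + n\overline{\ell}$ to normalize it to $1$. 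Your approach avoids the geometric Seifert-surface lifting at the cost of the more delicate algebraic computation of $p_*$; the paper's approach makes $f(\ell)=\overline{\ell}$ transparent but requires the surface-lifting discussion.
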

\begin{proof}
	Let $\overline{\ell}$ be the class of the homological longitude of $\overline{K}$ so that \ref{homologyMap:boundsSurface} is automatic. Indeed, using the identification $\mathrm{H}^1(\RP^3 \setminus \overline{K};\Z) \cong [\RP^3 \setminus \overline{K}, S^1]$ we can see that $\overline{\ell}$ is the boundary of an oriented surface $\overline{\Sigma}$ in $\RP^3$ which is the preimage of a regular point of a map $\RP^3 \setminus \overline{K} \to S^1$ generating $\mathrm{H}^1(\RP^3 \setminus \overline{K}; \Z)$. We can then produce a pair of Seifert surfaces for $K$ by lifting $\overline{\Sigma}$ from $\RP^3$ to $S^3$. The boundary of one of these surfaces is a homological longitude $\ell$ in $\mathrm{H}_1(\partial(S^3 \setminus K);\Z)$ whose image $f(\ell)$ in $\mathrm{H}_1(\partial(\RP^3 \setminus \overline{K}); \Z)$ is the homological longitude $\overline{\ell}$. We next need a second generator $\alpha$ for $\mathrm{H}_1(\partial(\RP^3 \setminus \overline{K}); \Z)$, and to verify items \ref{homologyMap:alpha} and \ref{homologyMap:matrix}. Since the map $\mathrm{H}_1(\partial(\RP^3 \setminus \overline{K}); \Z) \to \mathrm{H}_1(\RP^3 \setminus \overline{K};\Z)$ induced by inclusion is surjective by Lemma \ref{lem:rp3homology}, any choice of second generator for $\mathrm{H}_1(\partial(\RP^3 \setminus \overline{K}); \Z)$ must generate $\mathrm{H}_1(\RP^3 \setminus \overline{K}; \Z)$, satisfying \ref{homologyMap:alpha}. Additionally, the determinant of the map $f \colon \mathrm{H}_1(\partial(S^3 \setminus K); \Z) \to \mathrm{H}_1(\partial(\RP^3 \setminus \overline{K}); \Z)$ is $\pm 2$ by Lemmas \ref{lem:rp3homology} and \ref{lem:pi_1_ker}, noting that abelianization is right-exact. We may arrange the determinant to be $+2$ by a judicious choice of orientation on a meridian $m$ of $K$. Hence for any choice of $\alpha$, the map $f\colon \Z\langle m, \ell \rangle \to \Z\langle \alpha,\overline{\ell} \rangle$ is represented by the matrix
		\[
			\begin{pmatrix}
				2 & 0 \\
				x & 1
			\end{pmatrix},
		\]
	where $x$ depends on the choice of $\alpha$. 

	Next, observe that $f(m)$ is represented by a simple closed curve in $\partial(\RP^3 \setminus \overline{K})$. Indeed, $m$ is represented by a simple closed curve (which we also call $m$) in $\partial(S^3 \setminus K)$, and the non-trivial deck transformation $\tau$ is the restriction of the freely 2-periodic symmetry on $S^3$ which acts freely on $K$ so that $\tau(m)$ is disjoint from $m$. Hence $f(m)$ is also a simple closed curve. Furthermore, a simple closed curve represents either a primitive or trivial class in the homology of $S^1 \times S^1 \cong \partial(\RP^3 \setminus \overline{K})$ (see e.g., \cite{MR0425967} or \cite{MR0454995}). We conclude that $x$ must be odd, or $f(m)$ would represent an even (and hence non-primitive) class in $\mathrm{H}_1(\partial(\RP^3 \setminus \overline{K});\Z)$. Now for any $n \in \Z$ we can change basis by replacing $\alpha$ with $n\overline{\ell} + \alpha$, which represents the same homology class in $\RP^3 \setminus \overline{K}$. This change of basis changes $x$ by $2n$ so that for an appropriate choice of $n$ we have $x = 1$. 
\end{proof}

\begin{prop} \label{prop:traceMinusTwo}
	Let $\rho_{\overline{K}} \colon \pi_1(\RP^3 \setminus \overline{K}) \to \SL{2}{\C}$ be a discrete faithful representation, $\rho_K = \rho_{\overline{K}}\big|_{\pi_1(S^3 \setminus K)}$, and $m \in \pi_1(S^3 \setminus K)$ be a meridian. Then $\rho_K$ is discrete and faithful, and $\tr{\rho_K(m)} = -2$.
\end{prop}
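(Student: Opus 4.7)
My plan is to prove discreteness and faithfulness by a routine restriction argument, then reduce the trace assertion via Lemma~\ref{lem:homologyMap} to the standard fact that the longitude of a hyperbolic knot in $S^3$ has trace $-2$ under any $\SL{2}{\C}$ lift of the discrete faithful representation. For the restriction part, Lemma~\ref{lem:pi_1_ker} identifies $\pi_1(S^3 \setminus K)$ as an index-$2$ subgroup of $\pi_1(\RP^3 \setminus \overline{K})$, and a subgroup of a discrete subgroup of $\SL{2}{\C}$ is discrete while the restriction of an injection is injective, so $\rho_K$ is immediately discrete and faithful.

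For the trace, I would pull the computation back into the peripheral subgroup using Lemma~\ref{lem:homologyMap}. Because $\pi_1(\partial(\RP^3 \setminus \overline{K})) \cong \Z^2$ is abelian, the additive relations $f(m) = 2\alpha + \overline{\ell}$ and $f(\ell) = \overline{\ell}$ become multiplicative identities $m = \alpha^2 \overline{\ell}$ and $\ell = \overline{\ell}$ in the peripheral group, and applying $\rho_{\overline{K}}$ yields
\[
\rho_K(m) = \rho_{\overline{K}}(\alpha)^2 \rho_{\overline{K}}(\overline{\ell}), \qquad \rho_K(\ell) = \rho_{\overline{K}}(\overline{\ell}).
\]
Peripheral elements in the discrete faithful representation of a cusped hyperbolic $3$-manifold are parabolic, and since $\alpha$ and $\overline{\ell}$ commute, the matrices $\rho_{\overline{K}}(\alpha)$ and $\rho_{\overline{K}}(\overline{\ell})$ are commuting parabolics sharing a common fixed point on $\mathbf{CP}^1$. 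After conjugating this point to $\infty$ both are upper triangular with diagonal entries $\pm 1$. The key observation is that $\rho_{\overline{K}}(\alpha)^2$ has both diagonal entries equal to $+1$ regardless of the sign of $\rho_{\overline{K}}(\alpha)$, so it is a parabolic of trace $+2$; a direct matrix multiplication then gives $\tr \rho_K(m) = \tr \rho_{\overline{K}}(\overline{\ell}) = \tr \rho_K(\ell)$.

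The proposition therefore reduces to showing $\tr \rho_K(\ell) = -2$. Since $K$ is a knot in $S^3$, its homological longitude $\ell$ lies in the commutator subgroup of $\pi_1(S^3 \setminus K)$, so $\rho_K(\ell)$ is the same matrix under either $\SL{2}{\C}$ lift of the discrete faithful $\PSL{2}{\C}$ representation. That this common value is the parabolic of trace $-2$ is a standard fact (see, e.g., the discussion in \cite[\S 4.1]{DunfieldFriedlJackson}). The main obstacle in this plan is invoking that last Kleinian-group-theoretic fact about knots in $S^3$; all other steps are direct consequences of Lemma~\ref{lem:homologyMap} and elementary properties of parabolic elements of $\SL{2}{\C}$.
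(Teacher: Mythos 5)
Your proof is correct and works, but it follows a genuinely different route from the paper's. Both arguments reduce via Lemma~\ref{lem:homologyMap} to computing $\tr\rho_{\overline{K}}(\alpha^2\overline{\ell})$, and both ultimately rest on the same nontrivial input, namely Calegari's theorem that the homological longitude of a cusped hyperbolic knot has trace $-2$ (you invoke it for $\ell$ in $S^3\setminus K$ via \cite[\S 4.1]{DunfieldFriedlJackson}; the paper invokes \cite[Corollary 2.4]{DCalegariTrace} for $\overline{\ell}$ in $\RP^3\setminus\overline{K}$, but since $f(\ell)=\overline{\ell}$ these are the same trace). Where you diverge is in evaluating $\tr\rho_{\overline{K}}(\alpha^2\overline{\ell})$. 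The paper argues by contradiction: assuming $\tr\rho_K(m)=+2$, it expands with the $\SL{2}{\C}$ trace identity to obtain $\tr\rho_{\overline{K}}(\alpha)\cdot\tr\rho_{\overline{K}}(\alpha\overline{\ell})=0$, then observes that traceless elements of $\SL{2}{\C}$ have finite order, contradicting torsion-freeness of the knot group. You instead conjugate the commuting parabolics $\rho_{\overline{K}}(\alpha)$ and $\rho_{\overline{K}}(\overline{\ell})$ to upper-triangular form with diagonal entries $\pm 1$, note that squaring wipes out the sign ambiguity of $\rho_{\overline{K}}(\alpha)$, and read off $\tr\rho_K(m)=\tr\rho_{\overline{K}}(\overline{\ell})=-2$ directly. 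Your version is arguably the cleaner of the two: it avoids the proof by contradiction, it dispenses with the torsion fact entirely (indeed $\alpha$ and $\alpha\overline{\ell}$ are both nontrivial peripheral, hence parabolic with trace $\pm 2$, so the paper's product could never have been zero anyway), and it explains \emph{why} the meridian trace is $-2$ rather than merely ruling out $+2$.
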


\begin{proof}
	Throughout we freely identify elements of $\pi_1(\partial(S^3 \setminus K))$ and $\pi_1(\partial(\RP^3 \setminus \overline{K}))$ with $\mathrm{H}_1(\partial(S^3 \setminus K); \Z)$ and $\mathrm{H}_1(\partial(\RP^3 \setminus \overline{K}); \Z)$ respectively. The restriction of a discrete, faithful representation is discrete and faithful. Meridians of knots in $S^3$ map to parabolic elements (see e.g., \cite[Exercise 5.43]{PurcellHypKnot}) under a discrete faithful representation, so $\tr{\rho_K(m)} = \pm 2$. Suppose that $\tr{\rho_K(m)}$ is $+2$. By Lemma \ref{lem:homologyMap}, the meridian $m$ maps to $\alpha^2 \overline{\ell}$ in the notation of that lemma, which we implicitly consider as elements of $\pi_1(\RP^3 \setminus \overline{K})$. Further, \cite[Corollary 2.4]{DCalegariTrace} implies that $\tr{\rho_{\overline{K}}(\overline{l})} = -2$. Then using $\SL{2}{\C}$ trace relations (see e.g., \cite[\S 3.4]{MR}), we have
		\[
		\begin{aligned}
			2=\tr{\rho_K(m)} = \tr{\rho_{\overline{K}}(\alpha^2\overline{\ell})} &= \tr{\rho_{\overline{K}}(\alpha)}\tr{\rho_{\overline{K}}(\alpha \overline{\ell})} - \tr{\rho_{\overline{K}}(\alpha \overline{\ell}^{-1} \alpha^{-1})}\\
			&= \tr{\rho_{\overline{K}}(\alpha)}\tr{\rho_{\overline{K}}(\alpha \overline{\ell})} - \tr{\rho_{\overline{K}}(\overline{\ell})} \\
			&= \tr{\rho_{\overline{K}}(\alpha)}\tr{\rho_{\overline{K}}(\alpha \overline{\ell})} + 2.
		\end{aligned}
		\]
	Hence, 
		\[
		\tr{\rho_{\overline{K}}(\alpha)}\tr{\rho_{\overline{K}}(\alpha \overline{\ell})} = 0.
		\]
	However, traceless matrices in $\SL{2}{\C}$ are torsion, and $\pi_1(S^3 \setminus \overline{K})$ is torsion-free. This is a contradiction.
\end{proof}

The following lemma seems to be well-known in the literature, but we were unable to find a specific reference, so we include a proof.
\begin{lem} \label{lem:noGarbage}
	Let $\Gamma$ be a torsion-free group with a representation $\rho \colon \Gamma \to \SL{2}{\C}$. Let $\Gamma'$ be a finite index subgroup of $\Gamma$ such that $\rho|_{\Gamma'}$ is discrete and faithful. Then $\rho$ is discrete and faithful.
\end{lem}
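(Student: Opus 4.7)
The plan is to handle discreteness and faithfulness of $\rho$ separately, in each case deducing the property for $\rho$ from the corresponding property for $\rho|_{\Gamma'}$ by combining the finite index of $\Gamma'$ in $\Gamma$ with, for faithfulness, the torsion-free hypothesis.

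For faithfulness, the key observation is that $\ker\rho \cap \Gamma' \subseteq \ker(\rho|_{\Gamma'}) = \{1\}$, so the map from $\ker\rho$ to the coset space $\Gamma/\Gamma'$ sending $\gamma \mapsto \gamma\Gamma'$ is injective. Hence $|\ker\rho| \leq [\Gamma:\Gamma'] < \infty$, and torsion-freeness of $\Gamma$ forces $\ker\rho = \{1\}$.

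For discreteness, the natural surjection of coset spaces $\Gamma/\Gamma' \twoheadrightarrow \rho(\Gamma)/\rho(\Gamma')$ shows that $\rho(\Gamma')$ has finite index in $\rho(\Gamma)$. Pick an open neighborhood $U$ of $1 \in \SL{2}{\C}$ with $U \cap \rho(\Gamma') = \{1\}$. Any discrete subgroup of the Hausdorff group $\SL{2}{\C}$ is automatically closed, so $\rho(\Gamma')$ is closed; its finitely many cosets $\rho(g_i)\rho(\Gamma')$ inside $\rho(\Gamma)$ are then closed in $\SL{2}{\C}$ as well, making $\rho(\Gamma) \setminus \rho(\Gamma')$ a closed set avoiding $1$. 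Intersecting $U$ with the open complement of $\rho(\Gamma) \setminus \rho(\Gamma')$ produces an open $W \ni 1$ with $W \cap \rho(\Gamma) = \{1\}$, and translating by elements of $\rho(\Gamma)$ gives discreteness at every point. The only mild subtlety is the closedness of discrete subgroups of $\SL{2}{\C}$, which I would either cite as folklore or dispatch in a line by using continuity of multiplication to shrink $U$ to a symmetric $V$ with $V^{-1}V \subseteq U$ and checking that no point outside $\rho(\Gamma')$ lies in its closure; I anticipate no serious obstacle.
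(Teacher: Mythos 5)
Your proof is correct but takes a genuinely different route from the paper's in both halves. For faithfulness, the paper picks a nontrivial $\gamma \in \Gamma$, notes that some positive power of $\gamma$ lands in $\Gamma'$, and uses torsion-freeness plus faithfulness of $\rho|_{\Gamma'}$ to conclude $\rho(\gamma)\neq\mathrm{Id}$. You instead observe that $\ker\rho$ injects into the finite set $\Gamma/\Gamma'$ because $\ker\rho\cap\Gamma'=\{1\}$, so $\ker\rho$ is a finite subgroup of the torsion-free group $\Gamma$ and therefore trivial. Both hinge on torsion-freeness, but your version is arguably cleaner since it avoids having to specify which power of $\gamma$ enters $\Gamma'$ (the paper's choice of $n = [\Gamma:\Gamma']$ is slightly imprecise when $\Gamma'$ is not normal, though harmless). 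For discreteness, the paper argues with sequences: take a Cauchy sequence in $\rho(\Gamma)$, pass to a subsequence in a single coset, left-translate into $\rho(\Gamma')$, and conclude it is eventually constant using (implicitly) that a discrete subgroup is uniformly discrete for a left-invariant metric. Your argument is topological: the finitely many nontrivial cosets of $\rho(\Gamma')$ in $\rho(\Gamma)$ form a closed set missing $\mathrm{Id}$, so a discreteness neighborhood for $\rho(\Gamma')$ can be shrunk to one for $\rho(\Gamma)$. This relies on discrete subgroups of a Hausdorff topological group being closed, which you correctly flag as the only real lemma and sketch how to handle. Both approaches are sound; the paper's is more hands-on and self-contained, while yours is more conceptual and does not use any metric.
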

\begin{proof}
	For discreteness, consider a Cauchy sequence $\gamma_i$ of elements of $\rho(\Gamma)$. Since $\Gamma'$ is of finite index in $\Gamma$, there is a coset $C$ of $\Gamma'$ such that there exists a subsequence $(\gamma_{i_j})$ of $(\gamma_i)$ contained in $\rho(C)$. Choose an element $c \in C$. Then $(\gamma_{i_j} c^{-1})$ is a sequence of elements of $\rho(\Gamma')$. This sequence is Cauchy since multiplication by $c^{-1}$ is continuous, so the sequence is eventually constant because $\rho(\Gamma')$ is discrete. Hence $(\gamma_{i_j})$ is eventually constant. This argument applies to each coset that has infinitely many elements of $(\gamma_i)$. As there are finitely many cosets, $(\gamma_i)$ is eventually constant.
	
	For faithfulness, let $n$ be the index of $\Gamma'$ in $\Gamma$. Take a nontrivial element $\gamma \in \Gamma$. We have $\gamma^n \in \Gamma'$, and $\gamma^n \neq \mathrm{Id}$ since $\Gamma$ is torsion-free. Since $\rho|_{\Gamma'}$ is faithful, we have $\rho(\gamma^n) \neq \mathrm{Id}$, and hence $\rho(\gamma) \neq \mathrm{Id}$.
\end{proof}
	\begin{proof}[Proof of Theorem \ref{thm:main}]
	Let $p \colon S^3 \setminus K \to \RP^3 \setminus \overline{K}$ be the $2$-fold cover. Let $X(K)$ and $X(\overline{K})$ be respectively the $\SL{2}{\C}$ character varieties of $\pi_1(S^3 \setminus K)$ and $\pi_1(\RP^3 \setminus \overline{K})$. Let $\widehat{p} \colon X(\overline{K}) \to X(K)$ be the map induced by $p$ (see e.g. \cite[\S 3]{BoyerLuftZhang} for details of this map). This map takes a character of a representation $\pi_1(\RP^3 \setminus \overline{K}) \to \SL{2}{\C}$ to the character of the representation $\pi_1(S^3 \setminus K)$ obtained by restriction. Let $X(\overline{K})_0$ be a canonical component of $X(\overline{K})$. By \cite[Proposition 3.1]{BoyerLuftZhang}, $\widehat{p}\big(X(\overline{K})_0\big)$ is a closed, irreducible algebraic curve. Moreover, the character of a representation on $X(\overline{K})_0$ maps to the character of the restriction of a representation to the index $2$ subgroup. In particular, the character of a discrete, faithful representation maps to the character of a discrete, faithful representation, and so $\widehat{p}\big(X(\overline{K})_0\big)$ is a canonical component, which we henceforth denote by $X(K)_0$. 
	
	Since $\widehat{p}|_{X(\overline{K})_0}$ surjects onto its image $X(K)_0$ which is a closed, irreducible curve, any point of $X(K)$ not in this image lies on a irreducible component distinct from $X(K)_0$. There are two lifts $\rho_+$ and $\rho_-$ of the discrete faithful representation $\pi(S^3 \setminus K) \rightarrow \PSL{2}{\C}$ to $\SL{2}{\C}$. If $m$ is a meridian of $K$, then $\tr(\rho_+(m)) = 2$ and $\tr(\rho_-(m)) = -2$. See, for example, \cite[\S 4.1]{DunfieldFriedlJackson}. By Proposition \ref{prop:traceMinusTwo}, the character of a discrete faithful representation in $X(\overline{K})$ maps to the character of a representation $\rho$ with $\tr{\rho(m)} = -2$. Moreover, by Lemma \ref{lem:noGarbage}, only characters of discrete faithful representations can map to characters of discrete faithful representations, so we conclude that the character of $\rho_+$ is not in the image of $\widehat{p}|_{X(\overline{K})_0}$.
\end{proof}
\begin{rem}
Theorem \ref{thm:main} can also be proved along the lines of \cite[Proposition 7.1]{BoyerLuftZhang}, although the method we use in this paper is more explicit.
\end{rem}

%	\begin{lem}
%		The induced map $\mathrm{H}_1(S^3 \setminus K; \Z) \rightarrow \mathrm{H}_1(\mathbf{RP}^3 \setminus \overline{K}; \Z)$ is (up to choice of generator) multiplication by $2$.
%	\end{lem}
%	\begin{proof}
%		By covering space theory, we have an exact sequence of groups
%		\[
%		1 \rightarrow \pi_1(S^3 \setminus K) \rightarrow \pi_1(\mathbf{RP}^3 \setminus \overline{K}) 
%		\]
%	\end{proof}
\section{Hyperbolic torsion for freely 2-periodic knots}
The hyperbolic torsion polynomial $\mathcal{T}_K(t)$ is the twisted Alexander polynomial associated to the discrete faithful representation of $\pi_1(S^3 \setminus K) \to \SL{2}{\C}$ that takes a meridian to a matrix with trace $+2$. The twisted Alexander polynomial associated to the other discrete faithful representation is $\mathcal{T}_K(-t)$ (\cite[Remark 4.4]{DunfieldFriedlJackson}).
\begin{proof}[Proof of Theorem \ref{thm:hyperbolictorsion}]
Let $\rho_- \colon \pi_1(S^3 \setminus K) \to \SL{2}{\C}$ be the discrete faithful representation that takes a meridian to a matrix with trace $-2$ so that $\mathcal{T}_K(-t)$ is the associated twisted Alexander polynomial. By \cite[Theorem 5]{HillmanLivingstonNaik}, it suffices to check that the discrete faithful representation $\rho_-$ is a restriction of a discrete, faithful representation of $\pi_1(\RP^3 \setminus \overline{K})$. This is the content of Proposition \ref{prop:traceMinusTwo}. 

It remains to show that $\mathcal{T}_K(-t^2)$ factors over the trace field of $S^3 \setminus K$. As a consequence of \cite[Theorem 4.2.1]{MR} and Lemma \ref{lem:rp3homology}, the invariant trace field of $\RP^3 \setminus \overline{K}$ is equal to its trace field. This field coincides with the (invariant) trace field of $S^3 \setminus K$ because the invariant trace field is a commensurability invariant (\cite[Theorem 3.3.4]{MR}).  It follows from \cite[Theorem 1.1(b)]{DunfieldFriedlJackson} that the coefficients of $\mathcal{T}_K(t)$ and $\mathcal{T}_{\overline{K}}(t)$ are in this common field. We conclude that the factorization appearing in \cite[Theorem 5]{HillmanLivingstonNaik} occurs over the trace field of $S^3 \setminus K$.
\end{proof}
We conclude with a pair of examples.
\begin{exmp}
	We consider the freely $2$-periodic knot $K = 10_{157}$ of the SnapPy census shown in Figure \ref{fig:10_157}. The trace field of $K$ is $\Q(z)$ where $z$ satisfies $z^3 + z - 1$; the discriminant of this number field is $-31$. The hyperbolic torsion of $K$, computed with \cite{SnapPyNT}, an extension of \cite{SnapPy} written by the second author, is
	\[
	\begin{aligned}
		\mathcal{T}_K(t) &= t^{10} + (-2 z^{2} - 3 z - 7) t^{9} + (12 z + 29) t^{8} + (5 z^{2} - 4 z - 72) t^{7} + (z^{2} - 11 z + 115) t^{6} \\
		&\quad + (-4 z^{2} + 16 z - 128) t^{5} + (z^{2} - 11 z + 115) t^{4} + (5 z^{2} - 4 z - 72) t^{3} + (12 z + 29) t^{2} \\
		&\quad +  (-2 z^{2} - 3 z - 7) t + 1.
	\end{aligned}	
	\]
	This polynomial is irreducible over $\Q(z)$ as is $\mathcal{T}_K(t^2)$. However $\mathcal{T}_K(-t^2)=f(t)f(-t)$, where
	\[
	\begin{aligned}
		f(t) &= t^{10} + (-z^{2} - z + 1) t^{9} + 5 t^{8} + (-3 z + 2) t^{7} + (z^{2} - 2 z + 7) t^{6} - 4 z t^{5} \\
		&\quad + (z^{2} - 2 z + 7) t^{4} + (-3 z + 2) t^{3} + 5 t^{2} + (-z^{2} - z + 1) t + 1.
	\end{aligned}
	\]
	That is, to apply \cite[Theorem 5]{HillmanLivingstonNaik} to the hyperbolic torsion polynomial, one must take the \textit{opposite} sign convention for the trace of a meridian of the one taken in \cite{DunfieldFriedlJackson}.
\end{exmp}
\begin{exmp} \label{ex:12n553}
Consider the knot $K = 12n553$. A computation with SnapPy \cite{SnapPy} shows that $K$ has an order-$2$ symmetry that preserves the orientation on $S^3$ and an orientation on $K$. Classical techniques such as the Alexander polynomial factorization conditions of Hartley \cite{Hartley} and Murasugi \cite{Murasugi} cannot determine whether this symmetry is 2-periodic or freely 2-periodic. However, Theorem \ref{thm:hyperbolictorsion} does. Indeed, the trace field of $K$ is $\Q(z)$, where $z$ satisfies $z^8 + z^6 - 3z^5 + 3z^4 + 5z^2 - z + 2$, and 
\[
\begin{aligned}
\mathcal{T}_K(t) &= \left(\frac{5}{4} z^{7} - \frac{9}{4} z^{6} + \frac{7}{2} z^{5} - \frac{9}{4} z^{4} + 7 z^{3} - 9 z^{2} + \frac{9}{4} z + \frac{23}{2}\right) t^{6} \\
&\quad + (-8 z^{7} + 20 z^{6} - 32 z^{5} + 16 z^{4} - 52 z^{3} + 84 z^{2} - 20 z - 88) t^{5} \\
&\quad + \left(\frac{37}{2} z^{7} - \frac{123}{2} z^{6} + 100 z^{5} - \frac{75}{2} z^{4} + 140 z^{3} - 271 z^{2} + \frac{123}{2} z + 259\right) t^{4}\\
&\quad + (-24 z^{7} + 88 z^{6} - 144 z^{5} + 48 z^{4} - 192 z^{3} + 392 z^{2} - 88 z - 368) t^{3}\\
&\quad + \left(\frac{37}{2} z^{7} - \frac{123}{2} z^{6} + 100 z^{5} - \frac{75}{2} z^{4} + 140 z^{3} - 271 z^{2} + \frac{123}{2} z + 259\right) t^{2}\\
&\quad + (-8 z^{7} + 20 z^{6} - 32 z^{5} + 16 z^{4} - 52 z^{3} + 84 z^{2} - 20 z - 88) t\\
&\quad + \frac{5}{4} z^{7} - \frac{9}{4} z^{6} + \frac{7}{2} z^{5} - \frac{9}{4} z^{4} + 7 z^{3} - 9 z^{2} + \frac{9}{4} z + \frac{23}{2}.
\end{aligned}
\]
Note that the coefficients of this polynomial are algebraic integers. We then check that $\mathcal{T}_K(-t^2)$ is irreducible over $\Q(z)$ using \cite{sagemath} so that $K$ cannot be freely 2-periodic. The knot $12n556$ has the same hyperbolic torsion, so that the same analysis applies.
\end{exmp}
\begin{rem}
	For knots that are freely $p$-periodic for $p$ odd, both discrete faithful representations are in the image of discrete faithful representations under $\widehat{p}$ as one can find with a similar calculation to that in Lemma \ref{prop:traceMinusTwo}. In short, one only has to pay attention to lifting issues when considering knots with even order free periods. These, however, are usually the most difficult case.
\end{rem}
\printbibliography
	
\end{document}